\DeclareMathAlphabet{\mathcal}{OMS}{cmsy}{m}{n}
\newtheorem{thm}{Theorem}[section]
\newtheorem{cor}[thm]{Corollary}
\newtheorem{lem}[thm]{Lemma}
\newtheorem{prop}[thm]{Proposition}
\theoremstyle{definition}
\newtheorem{rem}[thm]{Remark}
\numberwithin{equation}{section}
\titleformat{\section}{\normalfont\bfseries\centering}{\thesection.}{.25em}{}
\titleformat{\subsection}{\normalfont\bfseries}{\thesubsection.}{.25em}{}
\titleformat{\subsubsection}{\normalfont\it}{\thesubsubsection.}{.25em}{}
\titlespacing{\section}{0pt}{*4}{*1.5}
\titlespacing{\subsection}{0pt}{*4}{*0.5}
\newcommand{\braces}[1]{{\rm (}#1{\rm )}}
\newcommand{\<}{\langle}
\renewcommand{\>}{\rangle}
\newcommand{\wh}{\widehat}
\newcommand{\R}{\ensuremath{\mathbb R}}    % Real numbers
\newcommand{\N}{\ensuremath{\mathbb N}}    % Natural numbers
\newcommand{\linspan}{\operatorname{span}}
\newcommand{\bP}{\mathbb P}
\newcommand{\bE}{\mathbb E}
\newcommand{\diag}{\operatorname{diag}}
\newcommand{\Tr}{\operatorname{Tr}}
\newcommand{\bH}{\mathbb H}
\newcommand{\cinc}{\!\lhook\joinrel\xrightarrow{\hspace{.1cm}c\hspace{.1cm}}}
\begin{document}
%%%%%%%%%%%%%%%%%%%%%%%%%%%%%%%%%%%%%%%%%%%%%%%%%%%%%%%%%%%%%%%%%%%%%%%%%%%%%
%%%
%%%  HEAD OF PAPER
%%%
\title[Koopman-invariance of Gaussian RKHSs]{Invariance of Gaussian RKHSs under Koopman operators of stochastic differential equations with constant matrix coefficients}

\author[F.~Philipp]{Friedrich Philipp}
\address{{\bf F.~Philipp}
	Technische Universit\"at Ilmenau, Institute of Mathematics,
	Weimarer Stra\ss e 25, D-98693 Ilmenau, Germany}
\email{\href{mailto:friedrich.philipp@tu-ilmenau.de}{friedrich.philipp@tu-ilmenau.de}}
\urladdr{\url{www.fmphilipp.de}}

\author[M.~Schaller]{Manuel Schaller}
\address{{\bf M.~Schaller}
	Technische Universit\"at Ilmenau, Institute of Mathematics,
	Weimarer Stra\ss e 25, D-98693 Ilmenau, Germany}
\email{\href{mailto:manuel.schaller@tu-ilmenau.de}{manuel.schaller@tu-ilmenau.de}}
\urladdr{\url{www.tu-ilmenau.de/deq/manuel-schaller}}

\author[K.~Worthmann]{Karl Worthmann}
\address{{\bf K.~Worthmann}
	Technische Universit\"at Ilmenau, Institute of Mathematics,
	Weimarer Stra\ss e 25, D-98693 Ilmenau, Germany}
\email{\href{mailto:karl.worthmann@tu-ilmenau.de}{karl.worthmann@tu-ilmenau.de}}
\urladdr{\href{https://www.tu-ilmenau.de/universitaet/fakultaeten/fakultaet-mathematik-und-naturwissenschaften/profil/institute-und-fachgebiete/institut-fuer-mathematik/profil/fachgebiet-optimization-based-control/team/karl-worthmann}{Karl Worthmann}}

\author[S.~Peitz]{Sebastian Peitz}
\address{{\bf S.~Peitz}
	Paderborn University, Department of Computer Science, Data Science for Engineering, Germany}
\email{\href{mailto:sebastian.peitz@upb.de}{sebastian.peitz@upb.de}}
\urladdr{\url{www.uni-paderborn.de/en/person/47427}}

\author[F.~N\"uske]{Feliks N\"uske}
\address{{\bf F.~N\"uske}
	Max Planck Institute for Dynamics of Complex Technical Systems, Magdeburg, Germany, and Freie Universität Berlin, Berlin, Germany}
\email{\href{mailto:nueske@mpi-magdeburg.mpg.de}{nueske@mpi-magdeburg.mpg.de}}
\urladdr{\url{www.mi.fu-berlin.de/en/math/groups/ai4s/staff/nueske.html}}

\begin{abstract}
We consider the Koopman operator semigroup $(K^t)_{t\ge 0}$ associated with stochastic differential equations of the form $dX_t = AX_t\,dt + B\,dW_t$ with constant matrices $A$ and $B$ and Brownian motion $W_t$. We prove that the reproducing kernel Hilbert space $\bH_C$ generated by a Gaussian kernel with a positive definite covariance matrix $C$ is invariant under each Koopman operator $K^t$ if the matrices $A$, $B$, and $C$ satisfy the following Lyapunov-like matrix inequality: $AC^2 + C^2A^\top\le 2BB^\top$.  In this course, we prove a characterization concerning the inclusion $\bH_{C_1}\subset\bH_{C_2}$ of Gaussian RKHSs for two positive definite matrices $C_1$ and $C_2$. The question of whether the sufficient Lyapunov-condition is also necessary is left as an open problem.
\end{abstract}

\maketitle

\section{Introduction}
The Koopman operator~\cite{Koop31} of a (stochastic) dynamical system is a linear operator which is defined on a linear function space of so-called {\em observables}. For deterministic systems, it is simply defined as a composition operator with the flow $F$ of the system, i.e., $(K^tf)(x) = f(F(x,t))$. For stochastic systems, $(K^tf)(x)$ is defined as the conditional expectation of $f(F(\cdot,t))$, given that the trajectory starts at $x$. Since the (generally unknown) Koopman operator is linear and provides complete information about the (expected) process behavior, it is very appealing to approximate this operator using sample data in the fashion of modern machine learning, see e.g. the monograph \cite{KoopmanBook} and the references therein.

One of the most popular methods for Koopman operator learning is certainly Extended Dynamic Mode Decomposition (EDMD) \cite{WillKevr15}, a data-driven approach propagating a finite number of pre-defined observable functions along the flow, which results in a well-interpretable surrogate model for analysis, prediction, and control. EDMD has been successfully applied in a number of highly relevant applications such as molecular dynamics~\cite{SchuetteKoltaiKlus2016}, 
%,WuNusk17,KlusNueske2018
turbulent flows \cite{GianKolc18}, 
%,Mezi13
neuroscience~\cite{BrunJohn16}, or climate prediction~\cite{AzenEri20}, just to name a few. Rigorous error analyses for EDMD have been conducted in \cite{Mezi22,ZhanZuaz23,NuskPeit23,PhilScha24b}.

Since kernel methods play an important role in approximation theory and machine learning, it is not surprising that there exist kernel-based approaches for Koopman operator learning, one of them being a variant of EDMD, called kernel EDMD (kEDMD)~\cite{WRK15kernel,KSM20}. Kernel-based methods have been rigorously analyzed in, e.g., \cite{PhilScha24a,PhilScha23,KostLoun23,KostNov22} and in \cite{KoehPhil24} for deterministic systems, where uniform error bounds have been provided. As it turns out in the analysis, for providing error bounds on Koopman approximations in the kernel-based setting it is highly beneficial if the reproducing kernel Hilbert space (RKHS) generated by the kernel at hand is invariant under the Koopman operator. Therefore, an important task in kernel-based Koopman operator learning is to figure out which kernels and which systems are a good match in the sense that the generated RKHS is invariant under the Koopman operator of the system.

In this article, we consider the invariance of Gaussian reproducing kernel Hilbert spaces under the Koopman operator of dynamical systems driven by a stochastic differential equation (SDE) of the form $dX_t = AX_t\,dt + BdW_t$ with constant coefficient matrices $A$ and $B$, where $(A,B)$ is controllable and $A$ is Hurwitz. To be more precise, we prove that the RKHS $\bH_C$ generated by the Gaussian kernel $k(x,y) = \exp(-\|C^{-1}(x-y)\|^2)$ with a positive definite matrix $C$ is Koopman-invariant if the matrices $A$, $B$, and $C$ satisfy the  Lyapunov-like inequality $AC^2 + C^2A^\top\le 2BB^\top$, see Theorem \ref{t:invariance}. It is left open whether this sufficient condition is also necessary. In the course of proving Theorem \ref{t:invariance}, we also prove that $\bH_{C_1}\subset\bH_{C_2}$ holds for two positive definite matrices $C_1$ and $C_2$ if and only if $C_1^2\ge C_2^2$, see Proposition \ref{p:inc_gen}. This fact is well known \cite{shs} for the scalar case, where $C_1,C_2\in (0,\infty)$, but---to the best of the authors' knowledge---seems to be unknown in the general matrix case.

The paper is arranged as follows. In the next section, we briefly introduce the reader to the Koopman operator of SDEs and present the main result, Theorem \ref{t:invariance}. In Section \ref{s:gaussian}, we consider Gaussian kernels and their corresponding RKHSs and make use of the auxiliary results therein in Section \ref{s:proof} which is dedicated to the proof of Theorem \ref{t:invariance}.

\section{Setting and main result}
Recall that a pair of matrices $(A,B)$ with $A\in\R^{d\times d}$ and $B\in\R^{d\times m}$, $m\le d$, is called {\em controllable} if $(B,AB,\ldots,A^{d-1}B)$ has full rank $d$. The notion is due to the fact that a controlled system $\dot x(t) = Ax(t) + Bu(t)$ can be steered from any state $x(0)=x_0$ in any time $t>0$ to any state $z\in\R^d$ by a control function $u$ if and only if $(A,B)$ is controllable. Also recall that a square matrix $A$ is called {\em Hurwitz} if the real parts of the eigenvalues of $A$ are all negative. This implies that $\|e^{At}\|\le Me^{-\omega t}$ for all $t\ge 0$ with some constants $M,\omega>0$.

Let us consider a $d$-dimensional stochastic differential equation with constant matrix coefficients
\begin{align}\label{e:sde}
dX_t = AX_t\,dt + B\,dW_t,    
\end{align}
where $W_t$ is $m$-dimensional Brownian motion, $m\le d$, and $A\in\R^{d\times d}$, $B\in\R^{d\times m}$. Here, we assume that $(A,B)$ is controllable and that $A$ is Hurwitz. The solution process $X_t$ emerging from the SDE \eqref{e:sde} is also called a $d$-dimensional {\em Ornstein-Uhlenbeck process}. It has the unique stationary distribution
\[
d\mu(x) = (2\pi)^{-d/2}(\det\Sigma)^{-1/2}\exp(-\tfrac 12x^\top\Sigma^{-1}x)\,dx,
\]
where $\Sigma\in\R^{d\times d}$ is the positive definite matrix
\[
\Sigma = \int_0^\infty e^{As}BB^\top e^{A^\top s}\,ds.
\]
Note that the limit $\Sigma$ exists since $A$ is Hurwitz and that $\Sigma$ is positive definite thanks to controllability of $(A,B)$. In linear control theory, the matrix $\Sigma$ is called the {\em controllability Gramian} and is a solution of the matrix equation $AX+XA^\top = -BB^\top$.

The solution process $X_t$ is a time-homogeneous Markov process. Recall that every such process has a so-called Markov transition kernel $\rho_t(x,F) = \bP(X_t\in F\,|\,X_0=x)$, where $x\in\R^d$ and $F$ is a Borel set. Here, the transition kernel is absolutely continuous w.r.t.\ Lebesgue measure, and its density is given by (see \cite{vp})
\begin{align*}
\rho_t(x,dy)
&= (2\pi)^{-d/2}(\det\Sigma(t))^{-1/2}\exp\big(-\tfrac 12 (y-e^{At}x)^\top\Sigma(t)^{-1}(y-e^{At}x)\big)\,dy,
%&= (2\pi)^{-d/2}(\det\Sigma(t))^{-1/2}\exp\big(-\tfrac 12\|\Sigma(t)^{-\frac 12}(y-e^{At}x)\|^2\big),
\end{align*}
where
\[
\Sigma(t) = \int_0^te^{As}BB^\top e^{A^\top s}\,ds.
\]
Again, $\Sigma(t)$ is positive definite as $(A,B)$ is controllable.

The {\em Koopman operator} $K^t : L^2(\mu)\to L^2(\mu)$ at time $t\ge 0$ corresponding to the SDE \eqref{e:sde} is defined by
\begin{align}\label{e:koopman}
(K^tf)(x) := \bE[f(X_t)\,|\,X_0=x] = \int_{\R^d}f(y)\,\rho_t(x,dy),\qquad f\in L^2(\mu).
\end{align}
It is well known (see, e.g., \cite[Proposition 2.8]{PhilScha24a}) that $(K^t)_{t\ge 0}$ forms a strongly continuous semigroup of contractions on $L^2(\mu)$.

Here, we consider Gaussian kernels on $\R^d$ with a positive definite covariance matrix $C\in\R^{d\times d}$, i.e.,
\begin{equation*}
k^C(x,y) = \exp\big[-(x-y)^\top C^{-2}(x-y)\big] = \exp\big[-\|C^{-1}(x-y)\|^2\big].
\end{equation*}
Note that for $C = \sigma\cdot I_d$, $\sigma>0$, we obtain the usual RBF kernel $k^\sigma(x,y) = \exp(-\|x-y\|^2/\sigma^2)$. By $\bH_C$ we denote the RKHS generated by the kernel $k^C$. The Hilbert space norm on $\bH_C$ will be denoted by $\|\cdot\|_C$. For $\sigma>0$ we simply write $\bH_\sigma$ and $\|\cdot\|_\sigma$ instead of $\bH_{\sigma I}$ and $\|\cdot\|_{\sigma I}$, respectively.

The main result of this note reads as follows.

\begin{thm}\label{t:invariance}
Let $C\in\R^{d\times d}$ be a symmetric positive definite matrix such that
\begin{align}\label{e:cond}
\tfrac 12\big(AC^2+C^2A^\top\big)\,\le\,BB^\top.
\end{align}
Then the RKHS $\bH_C$ is invariant under each Koopman operator $K^t$, $t\ge 0$, associated with the SDE \eqref{e:sde}, and we have
\[
\|K^tf\|_C\le e^{t\cdot\Tr(-A)/2}\cdot\|f\|_C,\qquad f\in\bH_C.
\]
\end{thm}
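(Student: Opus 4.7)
The plan is to verify the invariance of $\bH_C$ by first testing $K^t$ on the kernel sections $k^C(\cdot,z)$, $z\in\R^d$, and then extending to all of $\bH_C$ by density together with a norm estimate. For the first step, $(K^t k^C(\cdot,z))(x) = \bE_x[k^C(X_t,z)]$ is a finite-dimensional Gaussian integral against the transition density $\rho_t(x,\cdot)$, which corresponds to $\mathcal N(e^{At}x,\Sigma(t))$. Completing the square in $y$ inside the integrand $\exp(-(y-z)^\top C^{-2}(y-z))\,\rho_t(x,dy)$ and invoking the matrix identity $C^{-2} - 2C^{-2}\bigl(2C^{-2}+\Sigma(t)^{-1}\bigr)^{-1}C^{-2} = \bigl(C^2+2\Sigma(t)\bigr)^{-1}$, which one verifies by right-multiplication with $C^2+2\Sigma(t)$, yields the clean form
\[
\bigl(K^t k^C(\cdot,z)\bigr)(x) \;=\; c(t)\, k^{C_t}\bigl(x,e^{-At}z\bigr),
\]
where $c(t) = \det\!\bigl(I + 2\Sigma(t)C^{-2}\bigr)^{-1/2}$ and $C_t^2 = e^{-At}\bigl(C^2+2\Sigma(t)\bigr)e^{-A^\top t}$.

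Next, I would show that the Lyapunov-like hypothesis implies $C_t^2 \ge C^2$. Differentiation gives $\tfrac{d}{ds}\bigl(e^{As}C^2 e^{A^\top s}\bigr) = e^{As}(AC^2+C^2A^\top)e^{A^\top s}$ and $\tfrac{d}{ds}\bigl(2\Sigma(s)\bigr) = 2e^{As}BB^\top e^{A^\top s}$, and since conjugation by $e^{As}$ preserves the positive-semidefinite order, integration of $AC^2+C^2A^\top \le 2BB^\top$ over $[0,t]$ yields $e^{At}C^2 e^{A^\top t} - C^2 \le 2\Sigma(t)$, which rearranges to $C_t^2 \ge C^2$. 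Proposition~\ref{p:inc_gen} then immediately gives $\bH_{C_t} \subset \bH_C$, and hence $K^t k^C(\cdot,z) \in \bH_C$ for every $z \in \R^d$.

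For the norm estimate, I would pass to the Fourier side, where the Gaussian RKHS norm takes the form $\|f\|_C^2 = (2\pi)^{-d}\pi^{-d/2}\det(C)^{-1}\!\int |\hat f(\omega)|^2 \exp\!\bigl(\tfrac{1}{4}\omega^\top C^2\omega\bigr)\,d\omega$. Since $K^t$ sends $e^{i\omega^\top\cdot}$ to $\exp(-\tfrac{1}{2}\omega^\top\Sigma(t)\omega)\,e^{i(e^{A^\top t}\omega)^\top\cdot}$, inserting this into the Fourier inversion of $f$ and performing the change of variables $\eta = e^{A^\top t}\omega$ (whose Jacobian contributes the factor $e^{-t\Tr(A)}$) yields
\[
\|K^t f\|_C^2 = e^{-t\Tr(A)}(2\pi)^{-d}\pi^{-d/2}\det(C)^{-1}\!\int |\hat f(\omega)|^2 \exp\!\Bigl(\tfrac{1}{4}\omega^\top e^{At}C^2 e^{A^\top t}\omega - \omega^\top\Sigma(t)\omega\Bigr) d\omega.
\]
The sought bound $\|K^t f\|_C \le e^{t\Tr(-A)/2}\|f\|_C$ therefore reduces to the pointwise matrix inequality $e^{At}C^2 e^{A^\top t} - C^2 \le 4\Sigma(t)$, which follows a fortiori from the estimate in the preceding step; invariance on all of $\bH_C$ then follows by density of $\linspan\{k^C(\cdot,z) : z\in\R^d\}$ together with this continuity estimate.

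The main obstacle I anticipate is the explicit Gaussian integration in the first step: correctly identifying the prefactor $c(t)$ and the matrix $C_t$, and recognizing the matrix-inversion identity relating $2C^{-2}+\Sigma(t)^{-1}$ to $C^2+2\Sigma(t)$, require careful bookkeeping, and one also has to check that all intermediate matrices remain positive definite. Once this is in hand, the Lyapunov comparison and the Fourier norm calculation are largely routine.
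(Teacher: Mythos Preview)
Your proposal is correct, and the first two steps---computing $K^t k^C(\cdot,z)$ as a Gaussian integral to obtain $\tau_t\,k^{C_t}_{e^{-At}z}$ with $C_t^2=e^{-At}(C^2+2\Sigma(t))e^{-A^\top t}$, and deducing $C_t^2\ge C^2$ from the Lyapunov hypothesis by integrating the conjugated inequality---coincide with what the paper does (Proposition~\ref{p:immer} together with the first half of the proof of Theorem~\ref{t:invariance}; the paper packages the Gaussian integral as Lemma~\ref{l:rechnen}).

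Where you genuinely diverge is in the norm estimate. The paper stays purely on the kernel side: inside Proposition~\ref{p:immer} it uses Proposition~\ref{p:inc_gen} once (the comparison $k^{(C^2+2\Sigma(t))^{1/2}}\preceq\frac{\det(C^2+2\Sigma(t))^{1/2}}{\det C}k^C$) to obtain $\|K^tf\|_{C_t}\le\tau_t^{1/2}\|f\|_C$, and then applies Proposition~\ref{p:inc_gen} a second time (now from $C_t^2\ge C^2$) to get $\|g\|_C\le(\det C_t/\det C)^{1/2}\|g\|_{C_t}$; the two determinant factors combine exactly to $\det e^{-At}=e^{-t\Tr A}$. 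You instead invoke the Fourier description of $\|\cdot\|_C$, compute $\widehat{K^tf}$ explicitly, and reduce the bound to the pointwise matrix inequality $e^{At}C^2e^{A^\top t}-C^2\le 4\Sigma(t)$. This is a legitimate alternative; in fact the Fourier calculation alone already proves both invariance and the norm bound for every $f\in\bH_C$ (since finiteness of the weighted integral characterizes membership in $\bH_C$), so your steps 1--3 and the density argument become redundant once step 4 is in place. Note also that your Fourier route only uses the inequality with $4\Sigma(t)$ on the right, whereas the paper's kernel-inclusion route genuinely needs $2\Sigma(t)$ to secure $C_t^2\ge C^2$; the trade-off is that the paper's argument avoids importing the Fourier characterization of the Gaussian RKHS and works entirely with Aronszajn's inclusion theorem.
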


Let us briefly discuss the Lyapunov-like condition \eqref{e:cond} on $C$.

\begin{rem}
(a) Since $A$ is assumed to be Hurwitz, for every positive definite matrix $Q\in\R^{d\times d}$ there exists a positive definite matrix $P\in\R^{d\times d}$ such that $AP+PA^\top = -Q$. Hence, the set of positive definite matrices $C$ satisfying the matrix inequality \eqref{e:cond} is not empty. For example, $C = \alpha\Sigma^{1/2}$ is a solution for every $\alpha>0$.

\smallskip\noindent
(b) If $m=d$ and $B$ is invertible, then for any given positive definite matrix $C$ there exists $\tau>0$ such that $\tau C$ satisfies \eqref{e:cond}. Indeed, if this was not the case, there would exist a sequence $(x_n)\subset\R^d$ such that $\frac 1n\<(AC^2 + C^2A^\top)x_n,x_n\> > \|B^\top x_n\|^2$. It is no restriction to assume $\|x_n\|=1$ and $x_n\to x$ as $n\to\infty$ for some $x\in\R^d$, $\|x\|=1$. But then, the above implies $B^\top x=0$ and hence $x=0$, a contradiction.

\smallskip\noindent
(c) It is not clear whether the condition \eqref{e:cond} is also necessary for Koopman invariance of $\bH_C$. We discuss this at the end of Section \ref{s:proof}.
\end{rem}

\begin{cor}
Let $\sigma>0$ and assume that $\frac 12(A+A^\top)\le \frac 1{\sigma^2}BB^\top$ \braces{e.g., if $A$ is dissipative\footnote{Recall that a matrix $A\in\R^{d\times d}$ is called {\em dissipative} if $A+A^\top\le 0$.}}. Then the RKHS $\bH_\sigma$ is invariant under each Koopman operator $K^t$, $t\ge 0$, associated with the SDE \eqref{e:sde}, and we have
\[
\|K^tf\|_\sigma\le e^{t\cdot\Tr(-A)/2}\|f\|_\sigma,\qquad f\in\bH_\sigma.
\]
\end{cor}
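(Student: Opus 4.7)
The plan is to apply Theorem \ref{t:invariance} directly with the specific choice $C := \sigma I_d$, which is symmetric and positive definite. First I would compute $C^2 = \sigma^2 I_d$ and substitute into the Lyapunov-like condition \eqref{e:cond}: the left-hand side becomes $\tfrac12(AC^2 + C^2 A^\top) = \tfrac{\sigma^2}{2}(A+A^\top)$, so \eqref{e:cond} reads $\tfrac{\sigma^2}{2}(A+A^\top)\le BB^\top$, which after dividing by $\sigma^2>0$ is precisely the assumed inequality $\tfrac12(A+A^\top)\le \tfrac{1}{\sigma^2}BB^\top$. Thus the hypothesis of Theorem \ref{t:invariance} holds for $C=\sigma I_d$, and since $\bH_{\sigma I_d}=\bH_\sigma$ by definition, the invariance statement and the norm bound $\|K^tf\|_\sigma \le e^{t\Tr(-A)/2}\|f\|_\sigma$ follow immediately.

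For the footnote on dissipativity, I would just observe that if $A+A^\top\le 0$, then $\tfrac12(A+A^\top)\le 0 \le \tfrac{1}{\sigma^2}BB^\top$ for every $\sigma>0$ (since $BB^\top$ is positive semidefinite), so the hypothesis is automatically satisfied in that case. There is no real obstacle here; the corollary is a one-line specialization of the main theorem, and the only thing to verify is the correct book-keeping of the factor $\sigma^2$ when passing between the inequality on $C$ and the inequality on $\sigma$.
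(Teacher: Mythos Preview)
Your proposal is correct and matches the paper's approach exactly: the corollary is stated without proof in the paper, as it is an immediate specialization of Theorem~\ref{t:invariance} to $C=\sigma I_d$. Your bookkeeping of the $\sigma^2$ factor and the remark on the dissipative case are precisely what is needed.
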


We conclude this section by tying up some notations. First of all, we agree on the convention that a positive definite kernel is always symmetric, i.e., $k(x,y) = k(y,x)$, and real-valued. Also, the term ``positive definite'' only refers to real, symmetric matrices. Let $X$ be a set. For $x\in X$ and a positive definite kernel $k$ on $X$, set $k_x(y) := k(x,y)$, $y\in X$. That is, $x\mapsto k_x$ is the canonical feature map of the kernel $k$. For two positive definite kernels $k_1$ and $k_2$ on $X$ we write $k_1\preceq k_2$ if
$$
\sum_{i,j=1}^n\alpha_i\alpha_j k_1(x_i,x_j)\le \sum_{i,j=1}^n\alpha_i\alpha_j k_2(x_i,x_j)
$$
for any choice of $n\in\N$, $\alpha_j\in\R$, and $x_j\in X$, $j=1,\ldots,n$. We also write $V\cinc W$ (continuous embedding) for two normed vector spaces $V$ and $W$ to indicate that $V\subset W$ and the identity map from $V$ to $W$ is continuous, i.e., if there exists $K>0$ such that $\|v\|_W\le K\|v\|_V$ for all $v\in V$.

\section{Some properties of Gaussian kernels}\label{s:gaussian}
In this section, we collect the statements on Gaussian kernels that are utilized in the proof of Theorem \ref{t:invariance}. The next lemma shows that $\bH_C$ can be regarded as a dense subspace of $L^2(\mu)$ and that the $\bH_C$-norm is stronger than the $L^2(\mu)$-norm on $\bH_C$.

\begin{lem}
Let $C\in\R^{d\times d}$ be positive definite. Then the RKHS $\bH_C$ is densely and continuously embedded in $L^2(\mu)$.
\end{lem}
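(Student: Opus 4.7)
The statement has two parts; continuity of the embedding is essentially immediate from standard RKHS machinery, while density reduces to a Fourier-analytic argument on $\R^d$.

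For the continuous embedding, I would exploit the reproducing property together with the fact that the Gaussian kernel satisfies $k^C(x,x) = 1$ identically on $\R^d$. For any $f \in \bH_C$ and $x \in \R^d$, Cauchy--Schwarz in $\bH_C$ yields $|f(x)| = |\<f,k^C_x\>_C| \le \|f\|_C\sqrt{k^C(x,x)} = \|f\|_C$. Since $\mu$ is a probability measure, integrating $|f|^2$ against $\mu$ gives $\|f\|_{L^2(\mu)} \le \|f\|_C$. In particular $\bH_C \subset L^2(\mu)$, with continuous embedding constant at most $1$.

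For density, I would use the orthogonal complement characterization. Suppose $g \in L^2(\mu)$ is orthogonal to $\bH_C$ in $L^2(\mu)$. Since each kernel section $k^C_x$ lies in $\bH_C$ and is bounded by $1$ (hence in $L^\infty(\mu) \subset L^2(\mu)$), testing orthogonality against $k^C_x$ yields
\[
\int_{\R^d} g(y)\,k^C(x,y)\,d\mu(y) = 0 \qquad \text{for every } x \in \R^d.
\]
Write $d\mu(y) = p(y)\,dy$ where $p$ is the strictly positive Gaussian density. By Cauchy--Schwarz (and since $\mu$ is a probability measure), $gp \in L^1(\R^d)$. The display above then rewrites as the convolution identity $(gp) * \kappa \equiv 0$ on $\R^d$, where $\kappa(z) := \exp(-\|C^{-1}z\|^2)$.

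Applying the Fourier transform and the convolution theorem gives $\widehat{gp}\cdot\widehat{\kappa} \equiv 0$ on $\R^d$. Since $\widehat{\kappa}$ is (up to a positive constant) itself a Gaussian, and in particular nowhere vanishing, this forces $\widehat{gp} \equiv 0$, hence $gp = 0$ almost everywhere, and then $g = 0$ almost everywhere since $p > 0$ pointwise. Thus the orthogonal complement of $\bH_C$ in $L^2(\mu)$ is trivial, which is the desired density. The only mildly delicate point is checking $gp \in L^1(\R^d)$ and justifying the rewriting as a convolution; both follow quickly from Fubini, boundedness of $k^C$, and the probability normalization of $\mu$.
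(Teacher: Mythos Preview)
Your proof is correct and follows essentially the same route as the paper's. For the continuous embedding the paper simply cites Steinwart--Christmann, whereas you give the direct argument from $k^C(x,x)=1$ together with $\mu$ being a probability measure; for density, both you and the paper rewrite the vanishing integral as a convolution of $g\cdot p$ with the Gaussian $\kappa(z)=\exp(-\|C^{-1}z\|^2)$ and then conclude via the Fourier transform and the fact that $\widehat\kappa$ is nowhere zero.
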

\begin{proof}
The fact that $\bH_C\cinc L^2(\mu)$ is the first part of \cite[Theorem 4.26]{SteiChri08}. Moreover, if $f\in\bH_C$ and $f=0$ $\mu$-a.e., then $f(x)=0$ for all $x\in\R^d$ as $f$ is continuous and the density $d\mu / dx$ is positive. Therefore, we may regard $\bH_C$ as a subspace of $L^2(\mu)$. Concerning the density of $\bH_C$ in $L^2(\mu)$, we have to show that the integral operator $K : L^2(\mu)\to\bH_C$, defined by
\[
Kf(x) = \int_{\R^d}k^C(x,y)f(y)\,d\mu(y),\qquad f\in L^2(\mu),
\]
has trivial kernel in $L^2(\mu)$, see \cite[Theorem 4.26]{SteiChri08}. So, let $f\in L^2(\mu)$ such that $Kf=0$, i.e.,
\[
(\phi * \psi)(x) = \int_{\R^d}\underbrace{\exp(-\|C^{-1}(x-y)\|^2)}_{=\phi(x-y)}\cdot \underbrace{f(y)\exp(-\|Py\|^2)}_{=\psi(y)}\,dy = 0,\qquad x\in\R^d,
\]
where $P = (2\Sigma)^{-1/2}$. Applying the Fourier transform to $\phi * \psi$, we obtain $\wh\phi(\omega)\cdot\wh\psi(\omega)=0$ for a.e.\ $\omega\in\R^d$. But $\wh\phi(\omega) > 0$ for all $\omega\in\R^d$, hence $\wh\psi=0$ and thus $\psi=0$.
\end{proof}

The following lemma is well-known. We will apply it in the proof of Proposition \ref{p:inc_gen} below, which can be seen as a generalization of Lemma \ref{l:ist_bekannt} to the multi-dimensional case.

\begin{lem}\label{l:ist_bekannt}
For $\sigma>0$, consider the Gaussian kernel $k^\sigma$ on $\R$. If $\sigma_1\ge\sigma_2 > 0$, then $\bH_{\sigma_1}\cinc\bH_{\sigma_2}$ with $\|f\|_{\sigma_2}\le(\frac{\sigma_1}{\sigma_2})^{1/2}\|f\|_{\sigma_1}$ for $f\in\bH_{\sigma_1}$, and $k^{\sigma_1}\preceq\frac{\sigma_1}{\sigma_2} k^{\sigma_2}$.
\end{lem}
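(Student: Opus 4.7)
My plan is to first establish the pointwise kernel-domination $k^{\sigma_1}\preceq\frac{\sigma_1}{\sigma_2}k^{\sigma_2}$, and then deduce the continuous embedding $\bH_{\sigma_1}\cinc\bH_{\sigma_2}$ together with the claimed norm bound via the standard Aronszajn inclusion criterion for RKHSs.

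For the kernel-domination step, I would use the Fourier representation of the one-dimensional Gaussian: for every $\sigma>0$,
\[
e^{-(x-y)^2/\sigma^2} \;=\; \frac{\sigma}{2\sqrt\pi}\int_\R e^{-\sigma^2\omega^2/4}\,e^{i\omega(x-y)}\,d\omega.
\]
Writing down this identity for $\sigma_1$ and for $\sigma_2$ (the latter multiplied by $\sigma_1/\sigma_2$) and subtracting gives
\[
\tfrac{\sigma_1}{\sigma_2}\,k^{\sigma_2}(x,y)-k^{\sigma_1}(x,y) \;=\; \tfrac{\sigma_1}{2\sqrt\pi}\int_\R\bigl(e^{-\sigma_2^2\omega^2/4}-e^{-\sigma_1^2\omega^2/4}\bigr)\,e^{i\omega(x-y)}\,d\omega.
\]
Because $\sigma_1\ge\sigma_2$, the weight function in parentheses is pointwise nonnegative. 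Testing against any finite sample $(\alpha_i)_{i=1}^n\subset\R$, $(x_i)_{i=1}^n\subset\R$ therefore yields
\[
\sum_{i,j=1}^n\alpha_i\alpha_j\bigl[\tfrac{\sigma_1}{\sigma_2}k^{\sigma_2}(x_i,x_j)-k^{\sigma_1}(x_i,x_j)\bigr] \;=\; \tfrac{\sigma_1}{2\sqrt\pi}\int_\R\bigl(e^{-\sigma_2^2\omega^2/4}-e^{-\sigma_1^2\omega^2/4}\bigr)\Bigl|\textstyle\sum_{j=1}^n\alpha_j e^{-i\omega x_j}\Bigr|^2 d\omega \;\ge\; 0,
\]
which is exactly $k^{\sigma_1}\preceq\frac{\sigma_1}{\sigma_2}k^{\sigma_2}$. (Equivalently, the difference is positive definite by Bochner's theorem, being the Fourier transform of a nonnegative finite measure.)

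To conclude, I would invoke Aronszajn's classical inclusion principle: if two positive definite kernels $k_1,k_2$ on a set $X$ satisfy $ck_2-k_1\succeq 0$ for some $c>0$, then $\bH_{k_1}\subset\bH_{k_2}$ and $\|f\|_{k_2}\le\sqrt{c}\,\|f\|_{k_1}$ for every $f\in\bH_{k_1}$. Applying this with $c=\sigma_1/\sigma_2$ produces the continuous embedding and the norm estimate claimed in the lemma. No step is genuinely hard — the lemma is folklore; the only delicate point is fixing the Fourier normalization so that the prefactor $\sigma_1/\sigma_2$ falls out in the correct place, which is why I keep the constants explicit above.
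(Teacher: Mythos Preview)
Your argument is correct. The Fourier identity is right with the normalization you chose, the nonnegativity of the spectral weight $e^{-\sigma_2^2\omega^2/4}-e^{-\sigma_1^2\omega^2/4}$ follows from $\sigma_1\ge\sigma_2$, and the quadratic-form computation then gives $k^{\sigma_1}\preceq\tfrac{\sigma_1}{\sigma_2}k^{\sigma_2}$. Invoking Aronszajn's inclusion theorem (Theorem~\ref{t:aronszajn} in the paper) then yields the embedding with the stated constant.

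The paper's own proof proceeds in the opposite logical direction and is essentially a citation: it quotes the embedding $\bH_{\sigma_1}\cinc\bH_{\sigma_2}$ together with the norm bound from \cite[Corollary~6]{shs}, and then deduces $k^{\sigma_1}\preceq\tfrac{\sigma_1}{\sigma_2}k^{\sigma_2}$ as a consequence of Aronszajn's theorem. Your route is thus genuinely different in that it is self-contained---you prove the kernel domination from scratch via the Bochner/Fourier representation rather than importing the embedding from the literature---and then run Aronszajn the other way. What your approach buys is independence from the cited reference and an explicit reason for the constant $\sigma_1/\sigma_2$; what the paper's approach buys is brevity, since the result is already available in \cite{shs}.
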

\begin{proof}
The first part is \cite[Corollary 6]{shs}, and the second follows from Aronszajn's inclusion theorem, Theorem \ref{t:aronszajn}.
\end{proof}

\begin{prop}\label{p:inc_gen}
Let $C_1,C_2\in\R^{d\times d}$ be positive definite. Then $\bH_{C_1}\cinc\bH_{C_2}$ if and only if $C_1^2\ge C_2^2$. In this case, we have
\[
\|f\|_{C_2}\le(\tfrac{\det C_1}{\det C_2})^{1/2}\|f\|_{C_1},\qquad f\in\bH_{C_1},
\]
and $k^{C_1}\preceq\frac{\det C_1}{\det C_2}\cdot k^{C_2}$.
\end{prop}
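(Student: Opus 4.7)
The plan is to reduce the statement to a pointwise inequality between the spectral densities of the two Gaussian kernels and then invoke Aronszajn's inclusion theorem (Theorem \ref{t:aronszajn}) to translate this back into an RKHS statement. Since $k^C(x,y)$ depends only on $x-y$ and is continuous, Bochner's theorem applies. A direct Fourier computation (substituting $y=C^{-1}x$ and using the standard Gaussian integral) yields the spectral representation
\[
k^C(x-y)\,=\,c_d\det(C)\int_{\R^d}e^{i\omega^\top(x-y)}e^{-\|C\omega\|^2/4}\,d\omega,
\]
for some dimensional constant $c_d>0$. In particular, $k^C$ has the absolutely continuous spectral density $p_C(\omega):=c_d\det(C)e^{-\|C\omega\|^2/4}$.

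The key observation is then that, for any $\lambda>0$, the inequality $k^{C_1}\preceq\lambda k^{C_2}$ is equivalent to $\lambda k^{C_2}-k^{C_1}$ being a continuous positive definite translation-invariant kernel; by Bochner's theorem (together with injectivity of the Fourier transform and continuity of the densities), this is equivalent to
\[
\lambda\,\ge\,\tfrac{\det C_1}{\det C_2}\,e^{(\|C_2\omega\|^2-\|C_1\omega\|^2)/4}\qquad\text{for all }\omega\in\R^d.
\]
Both directions of the proposition now fall out of this criterion. If $C_1^2\ge C_2^2$, then the exponent on the right is non-positive for every $\omega$, so the supremum of the right-hand side over $\omega$ equals $\det C_1/\det C_2$, attained at $\omega=0$. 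Thus $\lambda=\det C_1/\det C_2$ is admissible, giving $k^{C_1}\preceq\tfrac{\det C_1}{\det C_2}k^{C_2}$, and Aronszajn's theorem delivers the continuous inclusion together with the stated norm bound. For the converse, if $\bH_{C_1}\cinc\bH_{C_2}$, then Aronszajn's theorem supplies some finite $\lambda$ satisfying the above inequality for every $\omega$; picking $v\in\R^d$ with $\|C_2v\|>\|C_1v\|$ and letting $\omega=tv$ with $t\to\infty$ forces the right-hand side to diverge, a contradiction. Hence $C_1^2\ge C_2^2$.

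The main obstacle is carefully justifying the passage from positive-definiteness of the kernel $\lambda k^{C_2}-k^{C_1}$ to non-negativity of the function $\lambda p_{C_2}-p_{C_1}$. This relies on Bochner's theorem for continuous translation-invariant positive definite kernels, on the identification via Fourier inversion of the associated spectral measure with the densities computed above, and on the observation that two continuous densities that agree as measures must agree pointwise, upgrading the spectral inequality from \emph{almost everywhere} to \emph{everywhere}.
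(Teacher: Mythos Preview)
Your argument is correct and takes a genuinely different route from the paper. The paper proceeds by three reductions: it first treats a positive diagonal matrix $D$ against the identity by writing $k^D=k^{\sigma_1}\otimes\cdots\otimes k^{\sigma_d}$ as a tensor product of one-dimensional Gaussians and invoking the scalar inclusion (Lemma~\ref{l:ist_bekannt}, taken from \cite{shs}) together with the tensor monotonicity Lemma~\ref{l:tensor}; it then passes to a general positive definite $C$ versus $I$ by orthogonal diagonalisation $C=U^\top DU$; finally it reduces arbitrary $C_1,C_2$ to the previous case via the change of variables $C=(C_2^{-1}C_1^2C_2^{-1})^{1/2}$ and the identity $k^{C_1}=k^C\circ C_2^{-1}$. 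Your Fourier/Bochner approach compares the spectral densities $p_{C_j}(\omega)=c_d\det(C_j)e^{-\|C_j\omega\|^2/4}$ directly and bypasses all of this structure. Its advantages are that it is self-contained (the one-dimensional result from \cite{shs} is no longer needed as an input) and that optimality of the constant $\det C_1/\det C_2$ is immediate, since it is visibly the supremum of $p_{C_1}/p_{C_2}$, attained at $\omega=0$. The cost is the appeal to Bochner's theorem and the uniqueness-of-spectral-measure argument you correctly flag in your final paragraph, whereas the paper's route stays entirely within the elementary RKHS toolkit already assembled in the appendix (Theorem~\ref{t:aronszajn} and Lemma~\ref{l:tensor}).
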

\begin{proof}
The proof of Proposition \ref{p:inc_gen} is divided into three parts.

{\bf 1.} Denote by $I = I_d$ the $d\times d$-identity matrix. We first prove that for a positive diagonal matrix $D\in\R^{d\times d}$ we have $\bH_{D}\cinc\bH_{I}$ if and only if $D\ge I$. For this, let $D = \diag(\sigma_1,\ldots,\sigma_d)$ with $\sigma_i>0$ for all $i=1,\ldots,d$. Observe that for $x,y\in\R^d$ we have (for the tensor product of kernels see \eqref{e:tensor})
\begin{align*}
k^D(x,y)
&= \exp\big(-(x-y)^\top D^{-2}(x-y)\big) = \exp\Big(-\sum_{k=1}^d\sigma_k^{-2}(x_k-y_k)^2\Big)= \prod_{k=1}^d\exp\Big[-\frac{(x_k-y_k)^2}{\sigma_k^2}\Big]\\
&= \prod_{k=1}^d k^{\sigma_k}(x_k,y_k) = (k^{\sigma_1}\otimes\cdots\otimes k^{\sigma_d})(x,y),
\end{align*}
where the $x_k$ and $y_k$ are the Cartesian coordinates of $x$ and $y$, respectively.

Hence, if $D\ge I$ (i.e., $\sigma_i\ge 1$ for all $i=1,\ldots,d$), Lemmas \ref{l:tensor} and \ref{l:ist_bekannt} imply
\[
k^D = k^{\sigma_{1}}\otimes\cdots\otimes k^{\sigma_{d}}\preceq \sigma_1k^{1}\otimes\cdots\otimes\sigma_dk^1 = \sigma_1\dots\sigma_d\cdot k^I = \det D\cdot k^I,
\]
and Aronszajn's inclusion theorem, Theorem \ref{t:aronszajn}, implies $\bH_D\cinc\bH_I$.

Conversely, let $\bH_D\cinc\bH_I$ and suppose that $\sigma_1 < 1$. By Theorem \ref{t:aronszajn}, we have $k^D\preceq\alpha k^I$ with some $\alpha>0$. By setting $x_k=y_k=0$ for $k=2,\ldots,d$, we see that $k^D(x,y) = k^{\sigma_1}(x_1,y_1)$ and $k^I(x,y) = k^1(x_1,y_1)$. Hence, we have $k^{\sigma_1}\preceq\alpha k^1$. On the other hand, $1 > \sigma_1$ and Lemma \ref{l:ist_bekannt} imply $k^1\preceq\sigma_1^{-1} k^{\sigma_1}$. This implies $\bH_{\sigma_1 I} = \bH_I$, which contradicts \cite[Corollary 7 ii)]{shs}.

{\bf 2.} Next, we prove that for a positive definite matrix $C\in\R^{d\times d}$ we have $\bH_{C}\cinc\bH_{I}$ if and only if $C\ge I$. For this, let $U\in\R^{d\times d}$ be an orthogonal matrix such that $C = U^\top DU$ with a diagonal matrix $D\in\R^{d\times d}$. Then $k^C = k^D\circ U$ and $k^I = k^I\circ U$. If $C\ge I$, then 
\[
k^C = k^D\circ U\preceq\det D\cdot k^I\circ U = \det C\cdot k^I.
\]
Conversely, if $\bH_C\cinc\bH_I$, then $k^C\preceq\alpha k^I$ with some $\alpha>0$, which implies $k^D\preceq\alpha k^I$ and hence $D\ge I$, which is equivalent to $C\ge I$.

{\bf 3.} Finally, let $C_1,C_2\in\R^{d\times d}$ be arbitrary symmetric positive definite matrices. Note that $C_1^2\ge C_2^2$ is equivalent to $C := 
 (C_2^{-1}C_1^2C_2^{-1})^{1/2}\ge I$. Furthermore, note that
\begin{align}\label{e:umrechnung}
k^C(x,y) = k^{C_1}(C_2x,C_2y)
\qquad\text{and}\qquad
k^I(x,y) = k^{C_2}(C_2x,C_2y),\qquad x,y\in\R^d.
\end{align}
Therefore, if $C_1^2\ge C_2^2$, by the second part of the proof,
\[
k^{C_1} = k^C\circ C_2^{-1}\preceq\det C\cdot (k^1\circ C_2^{-1}) = \tfrac{\det C_1}{\det C_2}\cdot k^{C_2},
\]
which yields $\bH_{C_1}\cinc\bH_{C_2}$. Conversely, if $\bH_{C_1}\cinc\bH_{C_2}$, then there exists $\alpha>0$ such that $k^{C_1}\preceq\alpha k^{C_2}$. In view of \eqref{e:umrechnung}, this gives $k^C\circ C_2^{-1} = k^{C_1}\preceq \alpha k^{C_2} = \alpha k^I\circ C_2^{-1}$ and hence $k^C\preceq\alpha k^I$. By part 2, this implies $C\ge I$.
\end{proof}

\begin{rem}
Note that $C_1\ge C_2$ does in general not imply $C_1^2\ge C_2^2$ for symmetric positive definite matrices $C_1$ and $C_2$. On the other hand $C_1\ge C_2$ implies $C_1^{1/2}\ge C_2^{1/2}$ and $C_1^{-1}\le C_2^{-1}$.
\end{rem}

We conclude this section with the following lemma.

\begin{lem}\label{l:rechnen}
Let $C_1,C_2\in\R^{d\times d}$ be positive definite and $z,w\in\R^d$. Then
\[
\int_{\R^d}k_z^{C_1}(y)\cdot k_w^{C_2}(y)\,dy = \frac{\pi^{d/2}}{\det(C_1^{-2}+C_2^{-2})^{1/2}}\cdot k^C(z,w),
%\exp\big(-(z-w)^\top(C_1^2+C_2^2)^{-1}(z-w)\big).
\]
where $C = (C_1^2+C_2^2)^{1/2}$.
\end{lem}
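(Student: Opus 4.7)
The plan is to combine the two Gaussian factors in $y$ into a single exponential of a quadratic form, complete the square in $y$, and then apply the standard multi-dimensional Gaussian integration formula. Set
\[
q(y) := (y-z)^\top C_1^{-2}(y-z) + (y-w)^\top C_2^{-2}(y-w),
\]
so that the integrand equals $e^{-q(y)}$. Writing $M := C_1^{-2} + C_2^{-2}$, the Hessian of $q$ (up to a factor of $2$) is the positive definite matrix $M$, and completing the square gives $q(y) = (y-y^*)^\top M (y-y^*) + q(y^*)$ with unique minimizer $y^* = M^{-1}(C_1^{-2}z + C_2^{-2}w)$. The standard Gaussian integral then yields
\[
\int_{\R^d} e^{-q(y)}\,dy = \frac{\pi^{d/2}}{\sqrt{\det M}}\,e^{-q(y^*)},
\]
which already accounts for the prefactor $\pi^{d/2}/\det(C_1^{-2}+C_2^{-2})^{1/2}$. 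What remains is to identify $q(y^*)$ with $(z-w)^\top C^{-2}(z-w)$, i.e., to recognize the Mahalanobis form in $z-w$ associated with $C^2 = C_1^2 + C_2^2$.

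To this end, a direct computation gives $y^*-z = M^{-1}C_2^{-2}(w-z)$ and $y^*-w = M^{-1}C_1^{-2}(z-w)$, so that
\[
q(y^*) = (z-w)^\top\bigl[C_2^{-2}M^{-1}C_1^{-2}M^{-1}C_2^{-2} + C_1^{-2}M^{-1}C_2^{-2}M^{-1}C_1^{-2}\bigr](z-w).
\]
The core step is the non-commutative Woodbury-type identity
\[
A(A+B)^{-1}B = B(A+B)^{-1}A = (A^{-1}+B^{-1})^{-1},
\]
which I would apply with $A := C_1^{-2}$ and $B := C_2^{-2}$. This identity follows at once from the algebraic factorization $A^{-1}+B^{-1} = A^{-1}(A+B)B^{-1} = B^{-1}(A+B)A^{-1}$, valid for arbitrary invertible $A, B$ without any commutativity assumption. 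Denoting the common value $AM^{-1}B = BM^{-1}A$ by $E$, the bracketed expression collapses to $EM^{-1}B + EM^{-1}A = EM^{-1}(A+B) = E = (C_1^{2}+C_2^{2})^{-1} = C^{-2}$, and the claim follows.

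The main (essentially the only) obstacle is that $C_1$ and $C_2$ need not commute, so one must resist the temptation to shuffle factors and instead track the order carefully; it is precisely the Woodbury-type identity above that makes the non-commutative computation work as cleanly as in the scalar case. Once this identity is in place, the rest of the proof is a routine book-keeping of Gaussian integration.
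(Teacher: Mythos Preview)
Your proof is correct and follows the same strategy as the paper's: complete the square in $y$, apply the standard Gaussian integral to obtain the prefactor $\pi^{d/2}/\sqrt{\det(C_1^{-2}+C_2^{-2})}$, and then simplify the remaining constant $q(y^*)$ via the identity $C_1^{-2}(C_1^{-2}+C_2^{-2})^{-1}C_2^{-2} = (C_1^2+C_2^2)^{-1}$. Your route to $q(y^*)=(z-w)^\top C^{-2}(z-w)$ via the closed expressions $y^*-z=M^{-1}C_2^{-2}(w-z)$, $y^*-w=M^{-1}C_1^{-2}(z-w)$ and the explicit Woodbury-type identity is somewhat more transparent than the paper's longer line-by-line expansion of $K(z,w)$, but the underlying algebra is the same.
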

\begin{proof}
Let $M := (C_1^{-2}+C_2^{-2})^{1/2}$. We have $k_z^{C_1}(y)k_w^{C_2}(y) = e^{-L(y)}$,
where
\begin{align*}
L(y)
&= (y-z)^\top C_1^{-2}(y-z) + (y-w)^\top C_2^{-2}(y-w)\\
&= y^\top(C_1^{-2}+C_2^{-2})y - 2\big(C_1^{-2}z + C_2^{-2}w\big)^\top y + z^\top C_1^{-2}z + w^\top C_2^{-2}w\\
&= \|My\|^2 - 2\big\<M^{-1}\big(C_1^{-2}z + C_2^{-2}w\big),My\big\> + \|C_1^{-1}z\|^2 + \|C_2^{-1}w\|^2\\
&= \big\|My - M^{-1}(C_1^{-2}z+C_2^{-2}w)\big\|^2 - K(z,w),
\end{align*}
where
\begin{align*}
K(z,w)
&= \big\|M^{-1}(C_1^{-2}z+C_2^{-2}w)\big\|^2 - \|C_1^{-1}z\|^2 - \|C_2^{-1}w\|^2\\
&= \big\<(C_1^{-2}+C_2^{-2})^{-1}(C_1^{-2}z+C_2^{-2}w),(C_1^{-2}z+C_2^{-2}w)\big\> - \|C_1^{-1}z\|^2 - \|C_2^{-1}w\|^2\\
&= \big\<z + M^{-2}(C_2^{-2}w-C_2^{-2}z),C_1^{-2}z+C_2^{-2}w\big\> - \|C_1^{-1}z\|^2 - \|C_2^{-1}w\|^2\\
&= \<z,C_2^{-2}w\> + \big\<C_2^{-2}(w-z),(C_1^{-2}+C_2^{-2})^{-1}(C_1^{-2}z+C_2^{-2}w)\big\> - \|C_2^{-1}w\|^2\\
&= \<z,C_2^{-2}w\> + \big\<C_2^{-2}(w-z),w + M^{-2}(C_1^{-2}z-C_1^{-2}w)\big\> - \|C_2^{-1}w\|^2\\
&= \big\<C_2^{-2}(w-z),M^{-2}C_1^{-2}(z-w)\big\> = \big\<(C_2^{2}M^{2}C_1^{2})^{-1}(w-z),(z-w)\big\>\\
&= -\<(C_1^2+C_2^2)^{-1}(z-w),(z-w)\big\> = -(z-w)^\top C^{-2}(z-w).
\end{align*}
Thus, we obtain
\begin{align*}
\int_{\R^d}k_z^{C_1}(y)\cdot k_w^{C_2}(y)\,dy
= e^{K(z,w)}\int_{\R^d}\exp(-\|My - M^{-1}(C_1^{-2}z + C_2^{-2}w)\|^2)\,dy = k^C(z,w)\cdot\frac{\pi^{d/2}}{\det M},
\end{align*}
and the lemma is proved.
\end{proof}

\section{Proof of Theorem 1.1}\label{s:proof}
We first prove the following proposition on the action of the Koopman operator of \eqref{e:sde} on $\bH_C$.

\begin{prop}\label{p:immer}
Let $t\ge 0$, let $C\in\R^{d\times d}$ be symmetric positive definite, and define
\begin{align}\label{e:diss}
C_t = \big[e^{-At}(C^2 + 2\Sigma(t))e^{-A^\top t}\big]^{1/2}
\qquad\text{and}\qquad
\tau_t = \frac{\det C}{(\det(C^2 + 2\Sigma(t)))^{1/2}}.
\end{align}
Then the Koopman operator $K^t$, associated with the SDE \eqref{e:sde} and defined in \eqref{e:koopman}, maps $\bH_C$ boundedly into $\bH_{C_t}$ with norm not exceeding $\tau_t^{1/2}$.
\end{prop}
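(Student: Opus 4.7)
The plan is to evaluate $K^t$ on the canonical feature sections $k_z^C$, observe that the image is, up to the uniform factor $\tau_t$, a feature section in $\bH_{C_t}$, and then combine density of $V:=\linspan\{k_z^C:z\in\R^d\}$ in $\bH_C$ with Proposition~\ref{p:inc_gen} to deduce the operator-norm bound. I will assume $t>0$; the case $t=0$ is trivial since $K^0=I$, $C_0=C$, and $\tau_0=1$.

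First I would rewrite the transition density as a Gaussian kernel: setting $D_t:=(2\Sigma(t))^{1/2}$, the identity $\det\Sigma(t)=2^{-d}(\det D_t)^2$ yields
\[
\rho_t(x,dy)=\pi^{-d/2}(\det D_t)^{-1}\,k^{D_t}(e^{At}x,y)\,dy.
\]
Applying Lemma~\ref{l:rechnen} with $C_1=C$, $C_2=D_t$, $w=e^{At}x$, and $C':=(C^2+D_t^2)^{1/2}=(C^2+2\Sigma(t))^{1/2}$, produces a scalar prefactor that collapses, via the (non-commutative but valid) factorisation $C^{-2}+D_t^{-2}=C^{-2}(C^2+D_t^2)D_t^{-2}$ and multiplicativity of the determinant, to exactly $\tau_t$. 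This should give
\[
(K^tk_z^C)(x)=\tau_t\cdot k^{C'}(z,e^{At}x).
\]
Next, the defining relation $C_t^2=e^{-At}(C')^2e^{-A^\top t}$ is equivalent to $(C')^{-2}=e^{-A^\top t}C_t^{-2}e^{-At}$. Writing $z-e^{At}x=e^{At}(e^{-At}z-x)$ and inserting this into the quadratic form transports $(\,\cdot\,)^\top(C')^{-2}(\,\cdot\,)$ into $(\,\cdot\,)^\top C_t^{-2}(\,\cdot\,)$, so that $k^{C'}(z,e^{At}x)=k_{e^{-At}z}^{C_t}(x)$ and hence
\[
K^tk_z^C=\tau_t\,k_{e^{-At}z}^{C_t},\qquad z\in\R^d.
\]
In particular $K^tV\subset\bH_{C_t}$.

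For the norm estimate I would take $f=\sum_{i=1}^n\alpha_ik_{z_i}^C\in V$ and apply the same conjugation identity, now between two points, to get $k^{C_t}(e^{-At}z,e^{-At}w)=k^{C'}(z,w)$, so that $\|K^tf\|_{C_t}^2=\tau_t^2\sum_{i,j}\alpha_i\alpha_jk^{C'}(z_i,z_j)$. Since $(C')^2=C^2+2\Sigma(t)\ge C^2$, Proposition~\ref{p:inc_gen} supplies $k^{C'}\preceq(\det C'/\det C)\,k^C$, i.e.\ $\tau_tk^{C'}\preceq k^C$ as positive definite kernels, and therefore $\|K^tf\|_{C_t}^2\le\tau_t\|f\|_C^2$. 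Density of $V$ in $\bH_C$ then yields a unique bounded linear extension $\bH_C\to\bH_{C_t}$ of norm at most $\tau_t^{1/2}$; its compatibility with the original $L^2(\mu)$-definition of $K^t$ follows from $\bH_C\cinc L^2(\mu)$ together with dominated convergence inside the defining integral. The only genuine obstacles I anticipate are two pieces of bookkeeping---the determinant simplification that produces the exact constant $\tau_t$, and the conjugation identity swapping $C'$ for $C_t$ via $e^{\pm At}$---after which Proposition~\ref{p:inc_gen} delivers the norm estimate immediately.
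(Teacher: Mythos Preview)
Your proposal is correct and follows essentially the same route as the paper: compute $K^t k_z^C$ via Lemma~\ref{l:rechnen} to obtain $\tau_t\,k^{C_t}_{e^{-At}z}$, use the conjugation identity $k^{C_t}(e^{-At}z,e^{-At}w)=k^{(C^2+2\Sigma(t))^{1/2}}(z,w)$ together with Proposition~\ref{p:inc_gen} for the norm bound on the span of feature maps, and extend by density. The only minor deviation is your appeal to dominated convergence for the compatibility of the extension with the $L^2(\mu)$-Koopman operator; the paper instead uses boundedness of $K^t$ on $L^2(\mu)$ and the continuous embeddings $\bH_C,\bH_{C_t}\cinc L^2(\mu)$, but both arguments work.
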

\begin{proof}
Let $z\in\R^d$. By Lemma \ref{l:rechnen}, we have
\begin{align*}
(K^tk_z^C)(x)
&= (2\pi)^{-d/2}(\det\Sigma(t))^{-1/2}\int_{\R^d}k_z^C(y)\exp\big(-\tfrac 12(y-e^{At}x)\Sigma(t)^{-1}(y-e^{At}x)\big)\,dy\\
&= (2\pi)^{-d/2}(\det\Sigma(t))^{-1/2}\int_{\R^d}k_z^C(y)k_{e^{At}x}^{(2\Sigma(t))^{1/2}}(y)\,dy\\
&= (2\pi)^{-d/2}(\det\Sigma(t))^{-1/2}\frac{\pi^{d/2}}{\det(C^{-2} + (2\Sigma(t))^{-1})^{1/2}}k^{(C^2 + 2\Sigma(t))^{1/2}}(z,e^{At}x)\\
&= \frac{(\det\Sigma(t))^{-1/2}}{\det(2C^{-2} + \Sigma(t)^{-1})^{1/2}}\cdot k^{C_t}_{e^{-At}z}(x) = 
\tau_t\cdot k^{C_t}_{e^{-At}z}(x),
\end{align*}
hence, $K^tk_z^C = \tau_t\cdot k^{C_t}_{e^{-At}z}\in\bH_{C_t}$. Therefore, if $f = \sum_{j=1}^n\alpha_jk^C_{x_j}$ with $\alpha_j\in\R$ and $x_j\in\R^d$ ($j=1,\ldots,n$), we obtain
\begin{align*}
\|K^tf\|_{C_t}^2
&= \Bigg\|\sum_{j=1}^n\alpha_jK^tk^C_{x_j}\Bigg\|_{C_t}^2 = \tau_t^2\Bigg\|\sum_{j=1}^n\alpha_jk^{C_t}_{e^{-At}x_j}\Bigg\|_{C_t}^2 = \tau_t^2\sum_{i,j=1}^n\alpha_i\alpha_jk^{C_t}(e^{-At}x_i,e^{-At}x_j)\\
&= \tau_t^2\sum_{i,j=1}^n\alpha_i\alpha_jk^{(C^2+2\Sigma(t))^{1/2}}(x_i,x_j)\le \tau_t^2\cdot\frac{\det(C^2+2\Sigma(t))^{1/2}}{\det C}\sum_{i,j=1}^n\alpha_i\alpha_jk^{C}(x_i,x_j) = \tau_t\|f\|_C^2,
\end{align*}
where the inequality is due to Proposition \ref{p:inc_gen}.

This shows that $K^t$ maps $\bH_{0,C} = \linspan\{k^C_x : x\in\R\}\subset\bH_C$ boundedly into $\bH_{C_t}$. Since $\bH_{0,C}$ is dense in $\bH_C$, it follows that $K^t|_{\bH_{0,C}}$ extends to a bounded operator $T : \bH_C\to\bH_{C_t}$. In order to see that $Tf = K^tf$ for $f\in\bH_C$, let $(f_n)\subset\bH_{0,C}$ such that $f_n\to f$ in $\bH_C$. Then $K^tf_n = Tf_n\to Tf$ in $\bH_{C_t}$. Since $\bH_C\cinc L^2(\mu)$, we have $f_n\to f$ in $L^2(\mu)$ and thus $K^tf_n\to K^tf$ in $L^2(\mu)$. Also, $K^tf_n\to Tf$ in $L^2(\mu)$. Hence, $K^tf=Tf$ $\mu$-a.e.\ on $\R^d$. But as both $K^tf$ and $Tf$ are continuous and $\mu$ is absolutely continuous w.r.t.\ Lebesgue measure with a positive density, we conclude that $K^tf = Tf\in\bH_C$.
\end{proof}

We are now in the position to prove the main result of this note---that the RKHS $\bH_C$ is invariant under the Koopman operator of the SDE \eqref{e:sde} if the Lyapunov-like condition \eqref{e:cond} on the interplay of the matrices $A$, $B$, and $C$ is satisfied.

\begin{proof}[Proof of Theorem \ref{t:invariance}]
Let $C_t$ and $\tau_t$ be defined as in Proposition \ref{p:immer}. We prove that $C_t^2\ge C^2$ for all $t\ge 0$ if any only if \eqref{e:cond} is satisfied. For $x\in\R^d$ and $t\ge 0$ we have
\begin{align*}
f_x(t) &:= \<C_t^2x,x\> = \<e^{-At}(C^2 + 2\Sigma(t))e^{-A^\top t}x,x\> = \|Ce^{-A^\top t}x\|^2 + 2\<e^{-At}\Sigma(t)e^{-A^\top t}x,x\>\\
&= \|Ce^{-A^\top t}x\|^2 + 2\Big\<\Big(\int_0^t e^{-As}BB^\top e^{-A^\top s}\,ds\Big)x,x\Big\> = \|Ce^{-A^\top t}x\|^2 + 2\int_0^t\big\|B^\top e^{-A^\top s}x\big\|^2\,ds.
\end{align*}
Moreover,
\begin{align*}
\dot f_x(t)
&= \big\<Ce^{-A^\top t}x,\tfrac d{dt}Ce^{-A^\top t}x\big\> + \big\<\tfrac d{dt}Ce^{-A^\top t}x,Ce^{-A^\top t}x\big\> + 2\big\|B^\top e^{-A^\top t}x\big\|^2\\
&= -\big\<AC^2e^{-A^\top t}x,e^{-A^\top t}x\big\> - \big\<C^2A^\top e^{-A^\top t}x,e^{-A^\top t}x\big\>  + 2\big\|B^\top e^{-A^\top t}x\big\|^2\\
&= \big\<\big[2BB^\top - (AC^2+C^2A^\top)\big]e^{-A^\top t}x,e^{-A^\top t}x\big\>.
\end{align*}
Note that $f_x(0) = \|Cx\|^2$. Hence, if \eqref{e:cond} holds, we have $\dot f_x(t)\ge 0$ for all $t\ge 0$ and all $x\in\R^d$, which yields $C_t^2\ge C^2$ for all $t\ge 0$. Conversely, if $C_t^2\ge C^2$ for all $t\ge 0$, i.e., $f_x(t)\ge f_x(0)$ for all $x\in\R^d$ and all $t\ge 0$, then $\dot f_x(0)\ge 0$ for all $x\in\R^d$, which is \eqref{e:cond}.

We may thus apply Proposition \ref{p:inc_gen} to conclude that $\bH_{C_t}\cinc\bH_C$ with $\|f\|_C^2\le (\frac{\det C_t}{\det C})\|f\|_{C_t}^2$ for $f\in\bH_{C_t}$ and all $t\ge 0$. Thus,
%\[
%k^M(e^{-At}x,e^{-At}y) = k^{(C^2+2\Sigma(t))^{1/2}}(x,y).
%\]
\begin{align*}
\|K^tf\|_C^2
&\le\frac{\det C_t}{\det C}\|K^tf\|_{C_t}^2 \le \tau_t\frac{\det C_t}{\det C}\|f\|_C^2 = \det e^{-At}\|f\|_C^2 = e^{t\cdot\Tr(-A)}\|f\|_C^2,
\end{align*}
which concludes the proof of the theorem.
\end{proof}

\begin{rem}
It is not clear whether the statement of Theorem \ref{t:invariance} is actually an equivalence. For the other direction one would have to show that $K^t\bH_C\subset\bH_C$ implies $\bH_{C_t}\cinc\bH_C$ as the latter was shown to be equivalent to \eqref{e:cond} in the proof of Theorem \ref{t:invariance}. Note that $K^t\bH_C\subset\bH_C$ at least implies $\bH_{0,C_t} = \linspan\{k_x^{C_t} : x\in\R^d\}\subset\bH_C$, see Proposition \ref{p:immer}.
\end{rem}

\section*{Acknowledgments}
F. Philipp was funded by the Carl Zeiss Foundation within the project {\it DeepTurb--Deep Learning in and from Turbulence}. He was further supported by the free state of Thuringia and the German Federal Ministry of Education and Research (BMBF) within the project {\it THInKI--Th\"uringer Hochschulinitiative für KI im Studium}.
K. Worthmann gratefully acknowledges funding by the Deutsche Forschungsgemeinschaft (DFG, German Research Foundation) -- Project-ID 507037103.

\appendix
\section{Some background on RKHS theory}

\smallskip
\begin{thm}[Aronszajn's inclusion theorem, {\cite[Theorem 5.1]{pr}}]\label{t:aronszajn}
Let $k_1$ and $k_2$ be two symmetric positive definite kernels on a set $X$. Denote their corresponding RKHS's by $\bH_1$ and $\bH_2$, respectively. Then $\bH_1\subset\bH_2$ if and only if $\bH_1\cinc\bH_2$. Moreover, for $c>0$ we have $\bH_1\lhook\joinrel\xrightarrow{\hspace{.1cm}c\hspace{.1cm}}\bH_2$ with $\|f\|_2\le c\|f\|_1$ for $f\in\bH_1$ if and only if $k_1\preceq  c^2 k_2$.
\end{thm}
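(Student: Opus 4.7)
The plan is to treat the qualitative and quantitative assertions separately. For the first statement---that a set-theoretic inclusion $\bH_1\subset\bH_2$ is automatically continuous---I would invoke the closed graph theorem. Let $\iota:\bH_1\to\bH_2$ denote the inclusion. If $f_n\to f$ in $\bH_1$ and $\iota f_n\to g$ in $\bH_2$, then in each $\bH_i$ the reproducing property together with Cauchy--Schwarz yields the pointwise estimate $|h(x)|\le \|h\|_i\sqrt{k_i(x,x)}$, so norm convergence forces pointwise convergence. Consequently $f_n\to f$ and $f_n\to g$ pointwise, hence $f=g$ as functions and $\iota$ has closed graph; boundedness follows.

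For the quantitative equivalence $\|f\|_2\le c\|f\|_1$ on $\bH_1\Leftrightarrow k_1\preceq c^2 k_2$, the ``$\Rightarrow$'' direction is the cleaner half. I would fix a finite tuple $x_1,\ldots,x_n\in X$ and introduce the sampling operator $S_i:\bH_i\to\R^n$, $S_if=(f(x_1),\ldots,f(x_n))^\top$. By the reproducing property its Hilbert adjoint is $S_i^*\alpha=\sum_j\alpha_jk_{i,x_j}$, hence $S_iS_i^*=G_i$ where $G_i:=[k_i(x_j,x_\ell)]_{j,\ell}$ is the Gram matrix. Since the inclusion intertwines evaluation, $S_1=S_2\iota$, so
\[
G_1 \;=\; S_1S_1^* \;=\; S_2(\iota\iota^*)S_2^* \;\le\; \|\iota\|^2\, S_2S_2^* \;\le\; c^2 G_2
\]
as symmetric $n\times n$ matrices. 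Testing against an arbitrary $\alpha\in\R^n$ is exactly the defining inequality of $k_1\preceq c^2 k_2$.

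The ``$\Leftarrow$'' direction is where I expect the main obstacle. Given $k_1\preceq c^2k_2$, the kernel $\tilde k:=c^2k_2-k_1$ is symmetric positive definite and, by Moore's theorem, generates an RKHS $\bH_{\tilde k}$. The crucial tool is the classical sum-of-kernels decomposition (also due to Aronszajn): if $k=k_a+k_b$ with both summands positive definite, then $\bH_k=\bH_a+\bH_b$ with infimal-decomposition norm $\|f\|_k^2=\inf\{\|u\|_a^2+\|v\|_b^2 : f=u+v\}$. Applied with $k_1+\tilde k=c^2k_2$, this gives $\bH_{c^2k_2}=\bH_1+\bH_{\tilde k}\supset\bH_1$. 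Rescaling the kernel by the positive factor $c^2$ leaves the RKHS unchanged as a set but divides the norm by $c$; thus $\bH_1\subset\bH_2$, and the trivial decomposition $f=f+0$ yields $c^{-2}\|f\|_2^2=\|f\|_{c^2k_2}^2\le\|f\|_1^2$, i.e., $\|f\|_2\le c\|f\|_1$. The genuinely non-trivial ingredient is the sum-of-kernels decomposition itself, which rests on the uniqueness of the RKHS attached to a given reproducing kernel; I would treat it as a standard companion result rather than reprove it from scratch. A direct alternative would be to construct the inclusion on the pre-RKHS $\linspan\{k_{1,x}:x\in X\}$ via a factorization through the square root of the Gram inequality $G_1\le c^2 G_2$, but verifying well-definedness on equivalence classes of representations is most cleanly packaged by the sum-of-kernels viewpoint.
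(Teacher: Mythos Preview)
The paper does not supply its own proof of this theorem: it is stated in the appendix purely as a quotation of \cite[Theorem~5.1]{pr}, so there is nothing to compare your argument against. Your outline is nonetheless correct and is essentially the classical proof found in that reference---closed graph for automatic continuity, a Gram-matrix/sampling argument for the forward quantitative implication, and Aronszajn's sum-of-kernels decomposition $\bH_{k_a+k_b}=\bH_{k_a}+\bH_{k_b}$ (with the infimal norm) for the reverse implication.
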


% Since the correspondence between kernels and their generated RKHS's is one-to-one, we immediately obtain the following corollary.

% \begin{corollary}\label{c:gleich}
% Let $k_1$ and $k_2$ be two symmetric positive definite kernels on a set $X$. If $k_1\preceq c\cdot k_2$ and $k_2\preceq c'\cdot k_1$ with $c,c'>0$, then $k_1 = k_2$.
% \end{corollary}

Given two symmetric positive definite kernels $k_X$ on $X$ and $k_Y$ on $Y$, we define the {\em tensor product} (cf.\ \cite[Definition 5.12]{pr}) $k_X\otimes k_Y : (X\times Y)^2\to\R$ of $k_X$ and $k_Y$ by
\begin{align}\label{e:tensor}
(k_X\otimes k_Y)\big((x,y),(x',y')\big) = k_X(x,x')\cdot k_Y(y,y').
\end{align}
It follows from the positive semi-definiteness of the Schur product of two positive semi-definite matrices (see, e.g., \cite[Theorem 4.8]{pr}) that $k_X\otimes k_Y$ is a symmetric positive definite kernel on $X\times Y$.

\begin{lem}\label{l:tensor}
Let $k_X,k_X'$ be kernels on $X$ and $k_Y,k_Y'$ kernels on $Y$ and assume that $k_X\preceq k_X'$ and $k_Y\preceq k_Y'$. Then also $k_X\otimes k_Y\preceq k_X'\otimes k_Y'$.
\end{lem}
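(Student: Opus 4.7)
The plan is to reduce the relation $\preceq$ to positive semi-definiteness of the difference, and then exploit bilinearity of the tensor product. By definition of $\preceq$, saying $k_X \preceq k_X'$ means that the kernel $k_X' - k_X$ is positive semi-definite (symmetric) on $X$, and analogously on $Y$. The goal thus becomes to show that $k_X'\otimes k_Y' - k_X\otimes k_Y$ is a positive semi-definite kernel on $X\times Y$.

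First, I would write the telescoping algebraic identity
\begin{equation*}
k_X'\otimes k_Y' \;-\; k_X\otimes k_Y \;=\; (k_X'-k_X)\otimes k_Y' \;+\; k_X\otimes(k_Y'-k_Y),
\end{equation*}
which is immediate from distributivity of the pointwise product appearing in the definition \eqref{e:tensor} of the tensor product.

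Next, I would invoke the observation made in the paper just above the lemma statement: the tensor product of two symmetric positive (semi-)definite kernels is again a symmetric positive (semi-)definite kernel, by the Schur product theorem for positive semi-definite matrices. Applied to the two summands: on the one hand $k_X'-k_X$ is PSD by hypothesis and $k_Y'$ is PSD as a kernel, so $(k_X'-k_X)\otimes k_Y'$ is PSD; on the other hand $k_X$ is PSD and $k_Y'-k_Y$ is PSD by hypothesis, so $k_X\otimes(k_Y'-k_Y)$ is PSD.

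Finally, the sum of two PSD kernels is PSD (this is immediate from the definition of $\preceq$), which yields $k_X\otimes k_Y \preceq k_X'\otimes k_Y'$. There is no real obstacle here; the only thing to be careful about is checking that the Schur-product fact genuinely applies to differences of kernels, which is fine since we treat such differences simply as symmetric PSD kernels in their own right.
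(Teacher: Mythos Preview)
Your proof is correct and takes a genuinely different, more elementary route than the paper. The paper expands $k_Y$ and $k_X'$ via orthonormal bases of their RKHSs (using the reproducing formula $k(y_1,y_2)=\sum_s e_s(y_1)e_s(y_2)$) and then applies the hypotheses $k_X\preceq k_X'$ and $k_Y\preceq k_Y'$ term-by-term inside the resulting sums. Your argument bypasses all RKHS machinery: the telescoping identity $k_X'\otimes k_Y'-k_X\otimes k_Y=(k_X'-k_X)\otimes k_Y'+k_X\otimes(k_Y'-k_Y)$ together with the Schur product fact (already quoted in the paper just before the lemma) immediately gives positive semi-definiteness of the difference. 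This is shorter and uses nothing beyond the Schur product theorem applied to the Gram matrices $[k_X(x_i,x_j)]$ and $[k_Y(y_i,y_j)]$; the only point worth making explicit is that these Gram matrices may have repeated arguments (since the $x_i$ need not be distinct when the $(x_i,y_i)$ are), but this is harmless for PSD-ness.
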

\begin{proof}
Denote the RKHS corresponding to $k_X$, $k_X'$, $k_Y$, and $k_Y'$ by $\bH_X$, $\bH_X'$, $\bH_Y$, and $\bH_Y'$, respectively. Let $(e_s)_{s\in S}$ and $(f_t)_{t\in T}$ be orthonormal bases of $\bH_Y$ and $\bH_X'$, respectively. Then we have ($x_1,x_2\in X$, $y_1,y_2\in Y$)
\[
k_Y(y_1,y_2) = \sum_{s\in S}e_s(y_1)e_s(y_2)
\qquad\text{and}\qquad
k_X'(x_1,x_2) = \sum_{t\in T}f_t(x_1)f_t(x_2)
\]
with pointwise convergence, see \cite[Theorem 2.4]{pr}. Let $n\in\N$ and $\alpha_j\in\R$, $(x_j,y_j)\in X\times Y$, $j=1,\ldots,n$. Then
\begin{align*}
\sum_{i,j=1}^n\alpha_i\alpha_j(k_X\otimes k_Y)\big((x_i,y_i),(x_j,y_j)\big)
&= \sum_{i,j=1}^n\alpha_i\alpha_j k_X(x_i,x_j)k_Y(y_i,y_j)\\
&= \sum_{s\in S}\sum_{i,j=1}^n[\alpha_ie_s(y_i)][\alpha_je_s(y_j)]k_X(x_i,x_j)\\
&\le \sum_{s\in S}\sum_{i,j=1}^n[\alpha_ie_s(y_i)][\alpha_j e_s(y_j)]k_X'(x_i,x_j)\\
&= \sum_{i,j=1}^n\alpha_i\alpha_j k_X'(x_i,x_j)k_Y(y_i,y_j)\\
&= \sum_{t\in T}\sum_{i,j=1}^n[\alpha_i\,f_t(x_i)][\alpha_j\,f_t(x_j)]k_Y(y_i,y_j)\\
&\le \sum_{i,j=1}^n\alpha_i\alpha_j k_X'(x_i,x_j)k_Y'(x_i,y_j)\\
&= \sum_{i,j=1}^n\alpha_i\alpha_j(k_X'\otimes k_Y')\big((x_i,y_i),(x_j,y_j)\big),
\end{align*}
which was to be proven.
\end{proof}

\section*{Author affiliations}

\begin{thebibliography}{10}
	\bibitem{AzenEri20}
	O.~Azencot, N.~B. Erichson, V.~Lin, and M.~W. Mahoney.
	\newblock Forecasting sequential data using consistent koopman autoencoders.
	\newblock {\em Proceedings of the 37th International Conference on Machine
		Learning (ICML)}, pages 475--485, 2020.
	
	\bibitem{BrunJohn16}
	B.~W. Brunton, L.~A. Johnson, J.~G. Ojemann, and J.~N. Kutz.
	\newblock Extracting spatial--temporal coherent patterns in large-scale neural
	recordings using dynamic mode decomposition.
	\newblock {\em Journal of neuroscience methods}, 258:1--15, 2016.
	
	\bibitem{GianKolc18}
	D.~Giannakis, A.~Kolchinskaya, D.~Krasnov, and J.~Schumacher.
	\newblock {K}oopman analysis of the long-term evolution in a turbulent
	convection cell.
	\newblock {\em Journal of Fluid Mechanics}, 847:735--767, 2018.
	
	\bibitem{KSM20}
	S.~Klus, I.~Schuster, and K.~Muandet.
	\newblock Eigendecompositions of transfer operators in reproducing kernel
	{H}ilbert spaces.
	\newblock {\em Journal of Nonlinear Science}, 30:283--315, 2020.
	
	\bibitem{KoehPhil24}
	F.~K\"ohne, F.~Philipp, M.~Schaller, A.~Schiela, and K.~Worthmann.
	\newblock $l^\infty$-error bounds for approximations of the koopman operator by
	kernel extended dynamic mode decomposition.
	\newblock {\em ArXiv preprint, arxiv:2403.18809}, 2024.
	
	\bibitem{Koop31}
	B.~O. Koopman.
	\newblock Hamiltonian systems and transformation in {H}ilbert space.
	\newblock {\em Proceedings of the National Academy of Sciences of the United
		States of America}, 17(5):315, 1931.
	
	\bibitem{KostNov22}
	V.~Kostic, P.~Novelli, A.~Maurer, C.~Ciliberto, L.~Rosasco, and M.~Pontil.
	\newblock Learning dynamical systems via {K}oopman operator regression in
	reproducing kernel {H}ilbert spaces.
	\newblock {\em Advances in Neural Information Processing Systems},
	35:4017--4031, 2022.
	
	\bibitem{KostLoun23}
	V.~R. Kostic, K.~Lounici, P.~Novelli, and M.~Pontil.
	\newblock Sharp spectral rates for {K}oopman operator learning.
	\newblock {\em Advances in Neural Information Processing Systems}, 36, 2023.
	
	\bibitem{KoopmanBook}
	A.~Mauroy, I.~Mezić, and Y.~Susuki.
	\newblock {\em The {K}oopman operator in systems and control}.
	\newblock Lecture Notes in Control and Information Sciences. Springer Cham,
	2020.
	
	\bibitem{Mezi22}
	I.~Mezi{\'c}.
	\newblock On numerical approximations of the {K}oopman operator.
	\newblock {\em Mathematics}, 10:1180, 2022.
	
	\bibitem{NuskPeit23}
	F.~N{\"u}ske, S.~Peitz, F.~Philipp, M.~Schaller, and K.~Worthmann.
	\newblock Finite-data error bounds for {K}oopman-based prediction and control.
	\newblock {\em Journal of Nonlinear Science}, 33(1):14, 2023.
	
	\bibitem{pr}
	V.~I. Paulsen and M.~Raghupathi.
	\newblock {\em An introduction to the theory of reproducing kernel {H}ilbert
		spaces}.
	\newblock Cambridge Studies in Advanced Mathematics 152, Cambridge University
	Press, 2016.
	
	\bibitem{PhilScha24b}
	F.~Philipp, M.~Schaller, S.~Boshoff, S.~Peitz, F.~N{\"u}ske, and K.~Worthmann.
	\newblock Extended dynamic mode decomposition: Bounds on the sample efficiency.
	\newblock {\em Arxiv preprint, arxiv:2402.02494}, 2024.
	
	\bibitem{PhilScha23}
	F.~Philipp, M.~Schaller, K.~Worthmann, S.~Peitz, and F.~N{\"u}ske.
	\newblock Error analysis of kernel {EDMD} for prediction and control in the
	{K}oopman framework.
	\newblock {\em Arxiv preprint, arxiv:2312.10460}, 2023.
	
	\bibitem{PhilScha24a}
	F.~M. Philipp, M.~Schaller, K.~Worthmann, S.~Peitz, and F.~Nüske.
	\newblock Error bounds for kernel-based approximations of the koopman operator.
	\newblock {\em Applied and Computational Harmonic Analysis}, 71:101657, 2024.
	
	\bibitem{SchuetteKoltaiKlus2016}
	C.~Sch{\"u}tte, P.~Koltai, and S.~Klus.
	\newblock On the numerical approximation of the {P}erron-{F}robenius and
	{K}oopman operator.
	\newblock {\em Journal of Computational Dynamics}, 3:1--12, 9 2016.
	
	\bibitem{SteiChri08}
	I.~Steinwart and A.~Christmann.
	\newblock {\em Support Vector Machines}.
	\newblock Springer Science+Business Media, LLC, 2008.
	
	\bibitem{shs}
	I.~Steinwart, D.~Hush, and C.~Scovel.
	\newblock An explicit description of the reproducing kernel {H}ilbert spaces of
	{G}aussian {RBF} kernels.
	\newblock {\em IEEE Transactions on Information Theory}, 52:4635--4643, 2006.
	
	\bibitem{vp}
	P.~Vatiwutipong and N.~Phewchean.
	\newblock Alternative way to derive the distribution of the multivariate
	{O}rnstein-{U}hlenbeck process.
	\newblock {\em Adv. Differ. Equ.}, 276, 2019.
	
	\bibitem{WillKevr15}
	M.~O. Williams, I.~G. Kevrekidis, and C.~W. Rowley.
	\newblock A data--driven approximation of the {K}oopman operator: Extending
	dynamic mode decomposition.
	\newblock {\em Journal of Nonlinear Science}, 25:1307--1346, 2015.
	
	\bibitem{WRK15kernel}
	M.~O. Williams, C.~W. Rowley, and I.~G. Kevrekidis.
	\newblock A kernel-based method for data-driven {K}oopman spectral analysis.
	\newblock {\em Journal of Computational Dynamics}, 2(2):247--265, 2015.
	
	\bibitem{ZhanZuaz23}
	C.~Zhang and E.~Zuazua.
	\newblock A quantitative analysis of {K}oopman operator methods for system
	identification and predictions.
	\newblock {\em Comptes Rendus M{\'e}canique}, 351(S1):1--31, 2023.	
\end{thebibliography}
\end{document}